\newtheorem{theorem}{Theorem}[section]
\newtheorem{lemma}[theorem]{Lemma}
\newtheorem{corollary}[theorem]{Corollary} % added
\theoremstyle{definition}
\theoremstyle{remark}
\newtheorem{remark}[theorem]{Remark}
\numberwithin{equation}{section}
\newcommand{\indic}[1]{1_{\{#1\}}}% indicator of a set
\newcommand{\TT}{\mathbb{T}} % torus
\newcommand{\EE}{\mathbb{E}} % expected value
\newcommand{\PP}{\mathbb{P}} % probability
\newcommand{\RR}{\mathbb{R}} % real numbers
\newcommand{\CC}{\mathbb{C}} % complex numbers
\newcommand{\hilbcircle}{\mathcal{H}^{\TT}}
\newcommand{\hilbline}{\mathcal{H}^{\RR}} 
\newcommand{\lone}{L^{1}(\TT)} % L^1 on the circle
\newcommand{\ltwo}{L^{2}(\TT)} % L^2 on the circle
\newcommand{\uonec}{\widetilde{U}} % special function for the afine (1,1) inequality
\newcommand{\utwo}{U_2} % special function for the (2,2) inequality
\newcommand{\utwoc}{\widetilde{\utwo}} % modified function for the afine (2,2) inequality
\newcommand{\itilde}{{(I)}}
\newcommand{\iitilde}{{(II)}}
\newcommand{\iiitilde}{{(III)}}
\newcommand{\ivtilde}{{(IV)}}
\begin{document}

\title[Hilbert transform]{A note on sharp one-sided bounds\\
for the Hilbert transform}

\author{Micha{\l} Strzelecki}
\address{Department of Mathematics, Informatics and Mechanics, University of Warsaw, Banacha 2, 02-097 Warsaw, Poland}
\email{m.strzelecki@mimuw.edu.pl}

\subjclass[2010]{Primary: 31A05, 60G44. Secondary: 42A50, 42A61.}

\date{November 17, 2014.}

\keywords{Hilbert transform, martingale, differential subordination, weak-type
inequality, best constants.}

\begin{abstract}
Let $\mathcal{H}^{\mathbb{T}}$ denote the Hilbert transform on the circle. The paper contains the proofs of the sharp estimates
\begin{equation*}
\frac{1}{2\pi}|\{ \xi\in\mathbb{T} : \mathcal{H}^{\mathbb{T}}f(\xi) \geq 1 \}| \leq \frac{4}{\pi}\arctan\left(\exp\left(\frac{\pi}{2}\|f\|_1\right)\right) -1, \quad f\in L^{1}(\mathbb{T}),
\end{equation*}
and
\begin{equation*}
\frac{1}{2\pi}|\{ \xi\in\mathbb{T} : \mathcal{H}^{\mathbb{T}}f(\xi) \geq 1 \}| \leq \frac{\|f\|_2^2}{1+\|f\|_2^2}, \quad f\in L^{2}(\mathbb{T}).
\end{equation*}
Related estimates for orthogonal martingales satisfying a subordination condition are also established.\end{abstract}

\maketitle

%%%%%%%%%%%%%%%%%%%%%%%%%%%%%%%%%%%%%%%%%%%%%%%%%%
\section{Introduction}
\label{section-introduction}
%%%%%%%%%%%%%%%%%%%%%%%%%%%%%%%%%%%%%%%%%%%%%%%%%%

Let $\hilbcircle$ denote the Hilbert transform on the unit circle~$\TT$ defined as the singular integral
\begin{equation*}
\hilbcircle f(e^{it}) = \frac{1}{2\pi} \operatorname{p.v.} \int_{-\pi}^{\pi} f(e^{is}) \cot \frac{t-s}{2} d s, \quad f\in\lone.
\end{equation*}
It is not a bounded operator on $\lone$, but a classical result of Kolmogorov~\cite{kolmogorov} states that 
\begin{equation*}
\frac{1}{2\pi} \left|\left\{ \xi\in\TT : |\hilbcircle f(\xi)| \geq 1\right\}\right| \leq c_1 \|f\|_1, \quad f\in\lone,
\end{equation*}
for some universal constant $c_1$ (here and below the symbol $\|\cdot\|_p$, $p\in[1,2]$, denotes the $p$-th norm with respect to the normalized Haar measure on $\TT$).

Recently Os\c{e}kowski~\cite{osekowski-one-sided} considered the following one-sided version of this estimate:
\begin{equation}\label{inequality-osekowski}
\frac{1}{2\pi} \left|\left\{ \xi\in\TT : \hilbcircle f(\xi) \geq 1\right\}\right| \leq \|f\|_1, \quad f\in\lone.
\end{equation}
The above inequality is optimal: for every $c<1$ there exists an integrable function $f$ such that $|\{ \xi\in\TT : \hilbcircle f(\xi) \geq 1\}| > 2\pi c \|f\|_1$ (see \cite{osekowski-one-sided}).

Our motivation comes from the following question. Let $m\in[0,1]$ be a given number and suppose that a function $f\in\lone$ satisfies $ |\{ \xi\in\TT : \hilbcircle f(\xi) \geq 1 \}| = 2\pi m$. How small can $\|f\|_1$ be? (For similar problems, arising in the context of martingale
transforms and the Haar system, consult the works of Burkholder \cite{burkholder-boundary-value} and
Choi \cite{choi-kp}.) Clearly, inequality \eqref{inequality-osekowski} gives some initial insight into this problem:
we must have $\|f\|_1 \geq m$. However, this bound is not sharp (as will follow from Corollary \ref{corollary-p-1-functions} below).

To answer the above question, we establish a class of weak-type bounds.	

\begin{theorem}\label{theorem-p-1-functions}
For every  $c\in(0,1]$ and every function $f\in\lone$ we have
\begin{align}\label{inequality-p-1-functions}
\begin{split}
\frac{1}{2\pi}\left| \left\{ \xi\in\TT : \hilbcircle f (\xi) \geq 1 \right\} \right| \leq & c \| f \|_1 \\
& +  1-\frac{2}{\pi}\arcsin(c)- \frac{2c}{\pi}\ln\left(1/c+\sqrt{1/c^2-1}\right).
\end{split}
\end{align}
For every $c\in(0,1]$ the constant added on the right-hand side is optimal.
\end{theorem}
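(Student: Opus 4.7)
My plan is to follow the superharmonic-majorant (Bellman function) method. Let $u$ be the harmonic extension of $f$ to the open unit disc and $v$ its harmonic conjugate normalized by $v(0)=0$, so $F=u+iv$ is analytic on the disc with boundary values $f+i\hilbcircle f$. A direct computation using the Cauchy--Riemann equations gives the identity $\Delta(U\circ F) = |F'|^{2}\,\Delta U$, so for any sufficiently regular $U\colon\RR^{2}\to\RR$ with $\Delta U\leq 0$ the composition $U\circ F$ is superharmonic on the disc. Passing to the boundary via the mean-value inequality at the origin yields
\[
U(u(0),0) \;\geq\; \frac{1}{2\pi}\int_{-\pi}^{\pi}U\bigl(f(e^{it}),\hilbcircle f(e^{it})\bigr)\,dt.
\]
It therefore suffices to produce a superharmonic $U$ satisfying (i) $U(x,y)\geq \indic{y\geq 1}$ on all of $\RR^{2}$ and (ii) $U(x,0)\leq c|x|+K(c)$ for every $x\in\RR$, where $K(c)$ denotes the constant added on the right of \eqref{inequality-p-1-functions}. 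Combined with $|u(0)|\leq\|f\|_{1}$ this delivers the bound \eqref{inequality-p-1-functions}.

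The substantive step is the explicit construction of $U$. Guided by the symmetry of the target $V(x,y)=\indic{y\geq 1}-c|x|$ and by the form of $K(c)$, which under the substitution $c=\sin\theta$ reads $1-\tfrac{2\theta}{\pi}-\tfrac{2\sin\theta}{\pi}\ln\cot(\theta/2)$, I would take $U$ to be piecewise harmonic on a partition of the plane by a handful of smooth interfaces: a top region containing $\{y\geq 1\}$ on which $U\equiv 1$, two outer regions (for $|x|$ large and $y$ bounded) on which $U$ agrees with $c|x|+K(c)$ (linear in $x$, hence harmonic off the $y$-axis), and a middle region where $U$ is the harmonic interpolation between these boundary values. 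The appearance of $\arcsin$ in $K(c)$ points to a uniformizer built from $\sin$, and the logarithmic summand emerges after composition with $\log$; the location of the interfaces is pinned down by a free-boundary condition forcing a concave kink in the normal derivative, which is exactly what guarantees superharmonicity across them. The lack of $C^{2}$-regularity on the interfaces is handled by a standard mollification. Once $U$ is in hand, verifying (i) and (ii) reduces to one-variable checks on the lines $y=1$ and $y=0$. The main obstacle, as always for sharp problems of this type, is guessing the correct partition and uniformizer; once the geometry is right, the rest is bookkeeping.

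For the optimality of $K(c)$ I would exhibit a sequence $(f_{n})$ in $\lone$ saturating \eqref{inequality-p-1-functions}. A natural recipe is to let $\Phi_{n}$ be a conformal map from the disc into the middle region used to construct $U$, pinned so that harmonic measure on $\TT$ concentrates on the interface arcs where $U$ is simultaneously equal to its indicator majorant and to its linear upper bound, and then set $f_{n}=\operatorname{Re}\Phi_{n}\big|_{\TT}$. Pushing normalized Haar measure forward through $\Phi_{n}$ converts both $\tfrac{1}{2\pi}|\{\hilbcircle f_{n}\geq 1\}|$ and $\|f_{n}\|_{1}$ into line integrals along the interface, and a direct computation shows that their difference tends to $K(c)$, yielding the required tightness. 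Specializing the whole argument to $c=1$ collapses $K(c)$ to $0$ and recovers the one-sided weak-type bound \eqref{inequality-osekowski} of Os\c{e}kowski.
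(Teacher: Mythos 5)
There is a genuine structural error in the way you have set up the majorization conditions on $U$. You ask for a superharmonic $U$ on $\RR^{2}$ with $U(x,y)\geq\indic{y\geq 1}$ everywhere and $U(x,0)\leq c|x|+K(c)$. The first condition makes $U$ a nonnegative superharmonic function on the whole plane; by the two-dimensional Liouville theorem (equivalently, recurrence of planar Brownian motion) any such function is constant, and then $U\equiv\mathrm{const}\geq 1$, which is incompatible with $U(0,0)\leq K(c)<1$. One can also see the problem at the level of the conclusion: your chain yields $\tfrac{1}{2\pi}|\{\hilbcircle f\geq 1\}|\leq U(u(0),0)\leq c|u(0)|+K(c)$, and since $u(0)=\tfrac{1}{2\pi}\int f$ vanishes for mean-zero $f$, taking $f(e^{it})=N\cos t$ with $N$ large would force $\tfrac12\leq K(c)$ for every $c$, contradicting $K(1)=0$. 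The fix is to move the $c|x|$ term to the lower bound: the correct requirements are $U(x,y)\geq\indic{y\geq 1}-c|x|$ on $\RR^{2}$ and $U(x,0)\leq K(c)$ for all $x$ (the latter because $u(0)$ is arbitrary). Then $U\circ F$ is superharmonic and unbounded below, the Liouville obstruction disappears, and the mean-value inequality gives
\begin{equation*}
\frac{1}{2\pi}\bigl|\{\hilbcircle f\geq 1\}\bigr|-c\|f\|_{1}\;\leq\;\frac{1}{2\pi}\int_{-\pi}^{\pi}U\bigl(f,\hilbcircle f\bigr)\,dt\;\leq\;U(u(0),0)\;\leq\;K(c),
\end{equation*}
which is the estimate you want. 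Note that after integration the $-c|x|$ contributes $-c\|f\|_{1}$, not $-c|u(0)|$; this is exactly where $\|f\|_{1}$ enters, and it is why the bound does not degenerate for mean-zero $f$.

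Beyond the sign issue, the proposal leaves the construction of $U$ entirely to a heuristic sketch, while this is the whole content of the proof. The paper avoids constructing a new function: it takes Os\c{e}kowski's function $U$ from \cite{osekowski-one-sided} (defined as the pullback under the conformal map $L$ of a Poisson integral in the upper half-plane — not a piecewise-linear glueing), observes that the simple rescaling $\uonec(x,y)=U(cx,cy)/c$ inherits all four properties, identifies the new constant via Lemma~\ref{lemma-value-u-0-c} as $U(0,c)$, and runs the argument through It\^{o}'s formula with a mollified $\uonec^{\delta}$ so that it applies to orthogonal, differentially subordinate martingales (Theorem~\ref{theorem-p-1-martingales}); the function case then follows by composing $\uonec$ with planar Brownian motion in the disc. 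The extremal example comes from Brownian motion in $H\setminus\{ai:a\geq 1/c\}$ started at $(0,1)$ (push-forward by a conformal map onto the disc for the function version), not from a free-boundary problem inside the construction of $U$. So: same philosophy, but with the conditions on $U$ as you stated them the object does not exist, and the actual work — producing the correct $U$ and matching $K(c)$ to $U(0,c)$ — is missing.
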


If we optimize the right-hand side of inequality \eqref{inequality-p-1-functions} with respect to the parameter $c\in(0,1]$, we can rewrite the above statement in the following way.

\begin{corollary}\label{corollary-p-1-functions}
For every function $f\in\lone$ we have
\begin{equation}\label{inequality-corollary-p-1-functions}
\frac{1}{2\pi}\left| \left\{ \xi\in\TT : \hilbcircle f (\xi) \geq 1 \right\} \right| \leq \frac{4}{\pi} \arctan \left( \exp\left( \frac{\pi}{2}\| f \|_1\right) \right) - 1.
\end{equation}
Moreover, for every  $m\in[0,1)$ there exists a function $f\in\lone$ for which both sides of \eqref{inequality-corollary-p-1-functions} are equal to $m$.
\end{corollary}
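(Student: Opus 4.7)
The plan is to derive (\ref{inequality-corollary-p-1-functions}) by minimizing the right-hand side of (\ref{inequality-p-1-functions}) over the parameter $c\in(0,1]$. With $x:=\|f\|_{1}$ fixed, set
\[
\phi(c):=cx+1-\tfrac{2}{\pi}\arcsin(c)-\tfrac{2c}{\pi}\ln\!\bigl(1/c+\sqrt{1/c^{2}-1}\bigr).
\]
A direct differentiation produces a convenient cancellation: the contribution of $\arcsin(c)$ is neutralized by one of the two terms arising from differentiating the logarithmic summand, so that
\[
\phi'(c)=x-\tfrac{2}{\pi}\ln\!\bigl(1/c+\sqrt{1/c^{2}-1}\bigr),\qquad \phi''(c)=\tfrac{2}{\pi c\sqrt{1-c^{2}}}>0.
\]
Thus $\phi$ is strictly convex on $(0,1)$, admits a unique interior minimizer $c_{\ast}$ characterized by $1/c_{\ast}+\sqrt{1/c_{\ast}^{2}-1}=e^{\pi x/2}$, and solving this equation gives $c_{\ast}=1/\cosh(\pi x/2)$.

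I would then substitute $c=c_{\ast}$ into $\phi$. The logarithmic summand contributes exactly $c_{\ast}x$ by construction, cancelling the initial $c_{\ast}x$ term, so $\phi(c_{\ast})=1-\tfrac{2}{\pi}\arcsin(c_{\ast})$. Since $\arcsin(c_{\ast})=\arcsin(\operatorname{sech}(\pi x/2))$, the Gudermannian identity $\arcsin(\operatorname{sech} t)=\pi-2\arctan(e^{t})$, valid for $t\geq 0$ (and immediate from checking the value at $t=0$ together with differentiation), produces $\phi(c_{\ast})=\tfrac{4}{\pi}\arctan(e^{\pi x/2})-1$, which is the right-hand side of (\ref{inequality-corollary-p-1-functions}).

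For the sharpness claim, the function $x\mapsto\tfrac{4}{\pi}\arctan(e^{\pi x/2})-1$ is a continuous increasing bijection of $[0,\infty)$ onto $[0,1)$. Given $m\in[0,1)$, I would let $x$ be its preimage and set $c_{\ast}=1/\cosh(\pi x/2)$. Invoking the optimality assertion of Theorem~\ref{theorem-p-1-functions} at $c=c_{\ast}$ should yield an extremal $f$ with $\|f\|_{1}=x$ saturating (\ref{inequality-p-1-functions}); for such an $f$ the left-hand side equals $\phi(c_{\ast})=m$, matching the right-hand side of (\ref{inequality-corollary-p-1-functions}). The main obstacle is that Theorem~\ref{theorem-p-1-functions}, as stated, only asserts that the additive constant \emph{cannot be improved}, which is a priori compatible with merely approximate extremizers; to produce a single function $f$ with the prescribed norm $x$ one must either exhibit an explicit extremizer (say a boundary-value or conformal-map construction emerging from the proof of Theorem~\ref{theorem-p-1-functions}) or supply a compactness argument promoting a near-extremizing sequence to a genuine extremum.
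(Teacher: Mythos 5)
Your derivation of the inequality is correct and follows the same route as the paper: minimize the right-hand side of \eqref{inequality-p-1-functions} over $c$, find the stationary point $c_*=\operatorname{sech}(\pi x/2)$ from the cancellation in $\phi'$, and simplify via the Gudermannian identity. (The paper phrases the cancellation as the algebraic identity $P'(c)-cE'(c)\equiv 0$ with $P,E$ the arcsine and logarithm pieces, but this is exactly your observation.) Your caution about the sharpness claim is well-placed as a literal reading of the statement of Theorem~\ref{theorem-p-1-functions}, but it dissolves once one consults its proof: Section~\ref{section-sharpness-p-1} constructs, for each $c\in(0,1]$, an \emph{explicit} extremizer $f$ via a conformal map of $H\setminus\{ai:a\geq 1/c\}$ onto the disk, and the accompanying Remark shows this $f$ satisfies $\|f\|_1 = \tfrac{2}{\pi}\ln(1/c+\sqrt{1/c^2-1}) = E(c)$ and $\tfrac{1}{2\pi}|\{\hilbcircle f\geq 1\}| = 1-\tfrac{2}{\pi}\arcsin(c) = P(c)$ exactly. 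Taking $c=c_*$ thus produces a genuine extremizer with $\|f\|_1 = x$ and both sides of \eqref{inequality-corollary-p-1-functions} equal to $m$, which is precisely the remedy you anticipated; neither a compactness argument nor a convexity-based localization of near-extremizers is needed.
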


Equivalently: if a function $f\in\lone$ satisfies $ |\{ \xi\in\TT : \hilbcircle f(\xi) \geq 1 \}| = 2\pi m$, then
\begin{equation*}
\|f\|_1 \geq \frac{2}{\pi} \ln \left( \tan \left( \frac{\pi}{4}(m+1)\right)\right)
\end{equation*}
and the right-hand side cannot be improved.

The second result contained in this note is the following one-sided version of the weak-type $(2,2)$ inequality for the Hilbert transform.

\begin{theorem}\label{theorem-p-2-functions}
For every  $c\in[0,1]$ and every function $f\in\ltwo$ we have
\begin{equation}\label{inequality-p-2-functions}
\frac{1}{2\pi}\left| \left\{ \xi\in\TT : \hilbcircle f (\xi) \geq 1 \right\} \right| \leq c^2 \| f \|_2^2 +  (1-c)^2.
\end{equation}
For every $c\in[0,1]$ the constant added on the right-hand side is optimal.
\end{theorem}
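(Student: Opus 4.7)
My plan is to derive Theorem~\ref{theorem-p-2-functions} directly from two elementary features of the Hilbert transform on $\TT$. First, the kernel $\cot((t-s)/2)$ integrates (as a principal value) to zero, so $\int_\TT \hilbcircle f = 0$. Second, the Fourier multiplier of $\hilbcircle$ is $-i\,\mathrm{sgn}(n)$, which vanishes at $n = 0$ and has modulus one elsewhere, so Plancherel yields $\|\hilbcircle f\|_2^2 \leq \|f\|_2^2$. With these two facts in hand, the inequality \eqref{inequality-p-2-functions} reduces to a one-sided Chebyshev estimate with a shift.

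For the upper bound, set $g = \hilbcircle f$ and, for $c \in (0,1]$, let $b = (1-c)/c \geq 0$ (the case $c = 0$ is trivial since the right-hand side equals $1$). Because $1+b > 0$, Markov's inequality applied to $(g+b)^2$ gives
\[
\frac{1}{2\pi}|\{g \geq 1\}| \leq \frac{\EE(g+b)^2}{(1+b)^2} = \frac{\EE g^2 + b^2}{(1+b)^2} \leq \frac{\|f\|_2^2 + b^2}{(1+b)^2},
\]
and a direct substitution shows that the last expression equals $c^2\|f\|_2^2 + (1-c)^2$, exactly as required.

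For the optimality of $(1-c)^2$, I would exhibit equality via a classical strip extremizer. Let $F \colon \{|z| < 1\} \to \{w \in \CC : -b < \Im w < 1\}$ be a conformal map with $F(0) = 0$ (with $b = (1-c)/c$), and set $f = \Re F|_\TT$ and $g = \Im F|_\TT$; then $g = \hilbcircle f$ a.e. By conformal invariance, the harmonic measure of the upper edge seen from the origin is $b/(1+b)$, so $g$ takes the values $1$ and $-b$ with probabilities $b/(1+b)$ and $1/(1+b)$, giving $\EE g^2 = b$. Since $F \in H^2$ with $F(0) = 0$, a one-line Fourier-coefficient identity yields $\|f\|_2^2 = \EE g^2 = b$, and then both sides of \eqref{inequality-p-2-functions} equal $b/(1+b)$. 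The case $c = 0$ is recovered by letting $b \to \infty$ in this family. I foresee no substantial obstacle in this plan; the only routine verifications are that $\Re F \in L^2(\TT)$, which follows from the logarithmic growth of $F$ at the two ends of the strip, and the identity $\EE(\Re F)^2 = \EE(\Im F)^2$ for $F \in H^2$ with $F(0) = 0$.
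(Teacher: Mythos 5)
Your proof is correct, and it takes a genuinely different and more elementary route than the paper's. The paper derives Theorem~\ref{theorem-p-2-functions} as a consequence of the martingale inequality \eqref{inequality-p-2-martingales}: it constructs the piecewise superharmonic function $\utwo$, applies It\^{o}'s formula to the process $\utwoc(X_t,1-Y_t)$, and then specializes to the Hilbert transform by evaluating harmonic extensions along a Brownian motion stopped at the unit circle. Your argument bypasses all of that by observing that $\hilbcircle$ is an $L^2$-contraction whose range consists of mean-zero functions, and by applying the one-sided Chebyshev estimate
\[
\frac{1}{2\pi}\bigl|\{g\geq 1\}\bigr|\leq \frac{\EE (g+b)^2}{(1+b)^2}=\frac{\|g\|_2^2+b^2}{(1+b)^2}\leq\frac{\|f\|_2^2+b^2}{(1+b)^2},\qquad b=\tfrac{1-c}{c},
\]
which is algebraically identical to the right-hand side of \eqref{inequality-p-2-functions}. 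What the paper's machinery buys is the strictly stronger statement about $\PP(\sup_{t\geq 0}Y_t\geq 1)$ for arbitrary orthogonal, differentially subordinate martingales, of which the Hilbert transform bound is only a corollary; your argument proves only the function-level inequality and cannot be adapted to the running supremum. What you buy in return is economy: two standard Fourier facts and a shifted Markov inequality, with no Bellman function and no stochastic calculus. For sharpness both approaches use essentially the same strip extremizer (in the paper's language, Brownian motion killed at the boundary of $\{0\leq y\leq 1/c\}$, which is your conformal image of the strip $\{-b<\Im w<1\}$ shifted by $(0,1)$), and your computation of $\|f\|_2^2=b$ via the Fourier identity $\|\Re F\|_2=\|\Im F\|_2$ for $F\in H^2$ with $F(0)=0$ is a clean shortcut to the same conclusion.
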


\begin{corollary}\label{corollary-p-2-functions}
For every function $f\in\ltwo$ we have
\begin{equation}\label{inequality-corollary-p-2-functions}
\frac{1}{2\pi}\left| \left\{ \xi\in\TT : \hilbcircle f (\xi) \geq 1 \right\} \right| \leq \frac{\|f\|_2^2}{1+\|f\|_2^2}.
\end{equation}
Moreover, for every  $m\in[0,1)$ there exists a function $f\in\ltwo$ for which both sides of \eqref{inequality-corollary-p-2-functions} are equal to $m$.
\end{corollary}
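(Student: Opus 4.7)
The plan is to deduce Corollary \ref{corollary-p-2-functions} from Theorem \ref{theorem-p-2-functions} by optimization in the parameter $c$, in direct parallel to the passage from Theorem \ref{theorem-p-1-functions} to Corollary \ref{corollary-p-1-functions}.

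For the upper bound \eqref{inequality-corollary-p-2-functions}, fix $f\in\ltwo$ and write $t := \|f\|_2^2$. The expression $c^2 t + (1-c)^2 = (1+t)c^2 - 2c + 1$ is a convex quadratic in $c$ with unique minimum on $[0,1]$ at
$$c^\star = \frac{1}{1+t} \in (0,1],$$
and a direct computation gives the minimum value $t/(1+t)$. Applying Theorem \ref{theorem-p-2-functions} with $c=c^\star$ yields precisely the bound $\|f\|_2^2/(1+\|f\|_2^2)$.

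For the sharpness claim, given $m\in[0,1)$ one seeks $f\in\ltwo$ satisfying $\|f\|_2^2 = m/(1-m)$ and $|\{\xi\in\TT : \hilbcircle f(\xi) \geq 1\}|/(2\pi) = m$, since then both sides of \eqref{inequality-corollary-p-2-functions} equal $m$. Setting $t_0 := m/(1-m)$, the first part yields the optimal value $c_0 = 1-m = 1/(1+t_0)$, and one checks that $c_0^2 t_0 + (1-c_0)^2 = m$. By the sharpness clause of Theorem \ref{theorem-p-2-functions}, there exist functions making inequality \eqref{inequality-p-2-functions} tight at $c = c_0$. The plan is to use such an extremizer (or a suitable scaling of it) with $L^2$-norm equal to $\sqrt{t_0}$; the already-proved upper bound then forces equality in \eqref{inequality-corollary-p-2-functions}.

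The optimization step is routine; the main obstacle is to extract from the sharpness proof of Theorem \ref{theorem-p-2-functions} an actual extremizer with $\|f\|_2$ exactly prescribed, as opposed to merely near-extremizers of a fixed $L^2$-norm. If the extremal family used in that proof is built from Hilbert-transform conjugates of normalized characteristic functions of arcs on $\TT$, then tuning the arc length and an overall scalar should allow one to hit $\|f\|_2^2 = t_0$ exactly while retaining tightness at $c = c_0$. This is where one must be careful to propagate equality (rather than a limit) through the reduction to the corollary.
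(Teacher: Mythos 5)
Your derivation of the upper bound is correct and is exactly the route the paper intends: minimize $c^2t+(1-c)^2$ over $c\in[0,1]$, finding $c^\star=1/(1+t)$ and minimum value $t/(1+t)$; the paper carries out the analogous optimization in detail for Corollary~\ref{corollary-p-1-functions} and simply says the $L^2$ case is analogous.

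Your sharpness discussion is correct in spirit but overstates the difficulty, and your guess at the form of the extremizer is off. In the construction sketched in Section~\ref{section-p-2} the extremal pair $(X,1-Y)$ is a planar Brownian motion started at $(0,1)$ and killed on exiting the strip $\{0\le y\le 1/c\}$ (equivalently, on the circle, boundary values of the real and imaginary parts of a conformal map of the disc onto that strip) --- not conjugates of indicators of arcs. For this construction one computes directly that
\begin{equation*}
\PP\Bigl(\sup_{t\ge 0} Y_t\ge 1\Bigr)=1-c,\qquad \|X\|_2^2=\frac{1-c}{c},
\end{equation*}
with exact equalities, so the family of extremizers is already parameterized by $c$ and automatically produces every prescribed $L^2$-norm as $c$ ranges over $(0,1]$. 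Taking $c_0=1-m$ gives $\|X\|_2^2=m/(1-m)$ and both sides of \eqref{inequality-corollary-p-2-functions} equal to $m$, with no further tuning needed. In fact, the ``overall scalar'' you propose cannot be used at all: replacing $f$ by $\lambda f$ changes $\{\hilbcircle f\ge 1\}$ to $\{\hilbcircle f\ge 1/\lambda\}$, so the level set is not scale-invariant; the only free parameter is $c$, and happily it suffices.
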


As in \cite{osekowski-one-sided} we in fact establish more general statements for orthogonal martingales which satisfy a subordination condition. We postpone further details concerning the probabilistic setting to Section \ref{section-probabilistic}, where we introduce all necessary definitions and formulate the probabilistic counterparts of Theorems \ref{theorem-p-1-functions} and \ref{theorem-p-2-functions}.

In Section \ref{section-osekowskis-function} we recall the construction and properties of the special function which was used to prove inequality \eqref{inequality-osekowski}. Then in Section \ref{section-proof-p-1} we explain how to modify this function in order to prove inequality \eqref{inequality-p-1-functions}. The sharpness of this inequality and the proof of Corollary \ref{corollary-p-1-functions} are presented in Section \ref{section-sharpness-p-1}. An analogous discussion for inequality \eqref{inequality-p-2-functions} can be found in Section \ref{section-p-2}.

In Section \ref{section-applications} we apply the results to find one-sided versions of the weak-type $(p,q)$ bounds on the Hilbert transform for $0<q\leq p\in\{1,2\}$.

Finally,  in Section \ref{section-final-remarks} we give some remarks about  one-sided versions of the weak-type $(p,p)$ inequalities for $1<p<2$. We also explain why inequalities \eqref{inequality-p-1-functions} and \eqref{inequality-p-2-functions} do not transfer to analogous estimates for the Hilbert transform on the real line.

%%%%%%%%%%%%%%%%%%%%%%%%%%%%%%%%%%%%%%%%%%%%%%%%%%
\section{Probabilistic setting}
\label{section-probabilistic}
%%%%%%%%%%%%%%%%%%%%%%%%%%%%%%%%%%%%%%%%%%%%%%%%%%

Let $(\Omega, \mathcal{F}, \PP)$ be a complete probability space, filtered by $(\mathcal{F})_{t\geq 0}$, a nondecreasing
family of sub-$\sigma$-algebras of $\mathcal{F}$, such that $\mathcal{F}_0$ contains all the events of
probability $0$. Let $X = (X_t)_{t\geq 0}, Y= (Y_t)_{t\geq 0}$ be two adapted real martingales with continuous paths and let $[X, Y]$ denote their quadratic covariance process (see e.g. \cite{dellacherie-meyer} for details). We say that the processes $X$ and $Y$
are orthogonal if $[X, Y]$ is constant almost surely. We say that $Y$ is differentially subordinate to $X$ if the process
$([X, X]_{t} -   [Y, Y ]_{t} )_{t\geq 0 }$ is nondecreasing and nonnegative as a function of $t$.

We establish the following results.

\begin{theorem}\label{theorem-p-1-martingales}
Assume that $X, Y$ are orthogonal martingales such that $Y$ is differentially subordinate to $X$ and $Y_0=0$. Then, for every $c\in(0,1]$,
\begin{equation}\label{inequality-p-1-martingales}
\PP ( \sup_{\substack t \geq 0} Y_t \geq 1) \leq c \| X \|_1 +  1-\frac{2}{\pi}\arcsin(c)- \frac{2c}{\pi}\ln\left(1/c+\sqrt{1/c^2-1}\right).
\end{equation}
For every $c\in(0,1]$ the constant added on the right-hand side is optimal.
\end{theorem}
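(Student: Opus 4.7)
The plan is to use the Burkholder--Osękowski method of special functions, modifying the function used in~\cite{osekowski-one-sided} to cover the full one-parameter family. For each fixed $c \in (0,1]$, set
$$M(c) := 1-\tfrac{2}{\pi}\arcsin(c)-\tfrac{2c}{\pi}\ln\bigl(1/c+\sqrt{1/c^2-1}\bigr),$$
and seek a function $U_c : \RR^2 \to \RR$ of class $C^1$ and piecewise $C^2$, satisfying the majorization $U_c(x,y) \geq \indic{y \geq 1} - M(c)$, the initial bound $U_c(x,0) \leq c|x|$, and the infinitesimal conditions $\partial_{xy} U_c \equiv 0$ together with $\partial_{xx} U_c + \partial_{yy} U_c \leq 0$ on the interior of each smoothness piece.

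Granting such a $U_c$, the deduction is standard. Itô's formula applied to $U_c(X_t, Y_t)$, combined with orthogonality (which kills the mixed second-order term via $[X,Y] \equiv 0$) and differential subordination (which, together with $\partial_{xx}U_c + \partial_{yy}U_c \leq 0$, makes the remaining trace term nonpositive), shows that $(U_c(X_t, Y_t))_{t \geq 0}$ is a supermartingale. Let $\tau := \inf\{s \geq 0 : Y_s \geq 1\}$. On $\{\tau < \infty\}$ path continuity yields $Y_\tau = 1$, hence $U_c(X_\tau, Y_\tau) \geq 1 - M(c)$; elsewhere $U_c(X_{\tau \wedge t}, Y_{\tau \wedge t}) \geq -M(c)$. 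Taking expectations and using $Y_0 = 0$, one obtains
$$\PP(\tau \leq t) - M(c) \leq \EE U_c(X_{\tau \wedge t}, Y_{\tau \wedge t}) \leq \EE U_c(X_0, 0) \leq c \, \EE|X_0| \leq c\|X\|_1,$$
and letting $t \to \infty$ yields inequality \eqref{inequality-p-1-martingales}.

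The construction of $U_c$ I would attempt by modifying Osękowski's $c=1$ function $\uonec$ (to be recalled in Section~\ref{section-osekowskis-function}) as follows: on a horizontal strip $\{|y| \leq y_0(c)\}$ about the real axis replace $\uonec$ by the affine function $c|x| - M(c)$, and retain (a translate of) $\uonec$ outside this strip, adjusting the free parameters so that the $C^1$ matching condition holds across $|y| = y_0(c)$. The values of $y_0(c)$ and of $M(c)$ are then forced by this matching requirement, and the transcendental form of $M(c)$ reflects the harmonic-conjugate computations hidden inside $\uonec$.

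The main difficulty I foresee lies in verifying that the resulting glued function satisfies all three properties \emph{globally} on $\RR^2$ — in particular that the majorization $U_c \geq \indic{y \geq 1} - M(c)$ survives throughout, and not only along the matching curve — and that the explicit constant produced by the matching really equals $M(c)$ as stated. Once \eqref{inequality-p-1-martingales} is established, sharpness of the additive constant is shown by exhibiting, for each $c$, a planar Brownian motion $(X,Y)$ started at a suitable $(x_0, 0)$ and stopped at the exit time from a carefully tuned thin strip; as the parameters are varied both sides of \eqref{inequality-p-1-martingales} can be brought to a common value, saturating the bound.
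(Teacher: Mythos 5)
Your high-level framework — special function, It\^o's formula, orthogonality killing the mixed term, differential subordination plus superharmonicity killing the trace term — is the correct and standard one, and matches the paper. But the proof is not there: you have set up the problem and listed properties you would like a function to satisfy, while explicitly acknowledging you have not verified that any candidate satisfies them. The paper's actual construction is both simpler and different from what you sketch. Rather than excising a horizontal strip from Os\c{e}kowski's $U$ and gluing in an affine piece with $C^1$ matching, the paper simply \emph{rescales}: the special function for parameter $c$ is
\begin{equation*}
\uonec(x,y) = \frac{1}{c}\,U(cx, cy),
\end{equation*}
where $U$ is Os\c{e}kowski's function for the $c=1$ case. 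Under this dilation all four properties (majorization, boundary estimate, concavity in $x$, superharmonicity) transfer mechanically, with the boundary constant becoming $\uonec(0,1)=U(0,c)/c$; the resulting additive constant is then $U(0,c)$, which a separate computation (Lemma~\ref{lemma-value-u-0-c}) identifies as $1-\frac{2}{\pi}\arcsin(c)-\frac{2c}{\pi}\ln(1/c+\sqrt{1/c^2-1})$. Your proposed patching geometry is also the wrong one for $p=1$: the level-set structure of $U$ divides the plane into the closed lower half-plane, the vertical ray $\{0\}\times[1,\infty)$, and their complement, not into horizontal strips (that strip structure is the $p=2$ picture, cf.\ Section~\ref{section-p-2}).

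A second concrete flaw: you impose the condition $\partial_{xy}U_c \equiv 0$ on the special function. This would force $U_c$ to be additively separable, $U_c(x,y)=f(x)+g(y)$, which neither Os\c{e}kowski's $U$ nor any viable candidate satisfies, and which is incompatible with the quadratic/harmonic pieces you'd actually need. It is also unnecessary: as you yourself note a sentence later, the mixed It\^o term vanishes because $[X,Y]$ is constant (orthogonality), not because of any pointwise condition on $U_c$. So that hypothesis should simply be dropped.

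Two smaller technical gaps you gloss over, which the paper handles: the function is not $C^2$ (it is not even $C^1$ across $y=0$), so one must mollify before applying It\^o and then pass to the limit in the smoothing parameter; and $\{\sup_t Y_t\geq 1\}$ is not a priori the same event as $\{\tau<\infty\}$ for $\tau=\inf\{t:Y_t\geq 1\}$, which is why the paper stops at level $1+\varepsilon$ and lets $\varepsilon\to 0$ at the end.
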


\begin{theorem}\label{theorem-p-2-martingales}
Assume that $X, Y$ are orthogonal martingales such that $Y$ is differentially subordinate to $X$ and $Y_0=0$. Then, for every $c\in[0,1]$,
\begin{equation}\label{inequality-p-2-martingales}
\PP ( \sup_{\substack t \geq 0} Y_t \geq 1) \leq c^2 \| X \|_2^2 + (1-c)^2 \end{equation}
For every $c\in[0,1]$ the constant added on the right-hand side is optimal.
\end{theorem}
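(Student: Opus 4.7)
The plan is to adapt Burkholder's method of special functions to the orthogonal subordinate martingale setting. The core task is to construct a function $U \colon \RR^2 \to \RR$ satisfying the pointwise majorization $U(x,y) \geq V(x,y) := \indic{y \geq 1} - c^2 x^2 - (1-c)^2$, the initial bound $U(x, 0) \leq 0$ for all $x \in \RR$, the concavity condition $U_{xx} \leq 0$, and the superharmonicity $U_{xx} + U_{yy} \leq 0$ (both understood distributionally). Such a $U$ turns $(U(X_t, Y_t))_{t \geq 0}$ into a supermartingale: orthogonality annihilates the mixed second-derivative term in Itô's formula, and the decomposition $U_{xx}\, d[X,X] + U_{yy}\, d[Y,Y] = (U_{xx}+U_{yy})\, d[Y,Y] + U_{xx}(d[X,X]-d[Y,Y])$, combined with the subordination $d[X,X] \geq d[Y,Y]$, renders the quadratic-variation part nonpositive.

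Guided by the anticipated extremal configuration, in which $X$ and $Y$ are independent Brownian motions started at $0$ and $Y$ is stopped the first time it exits $(-(1-c)/c,\, 1)$, I would propose
\begin{equation*}
U(x,y)=\begin{cases} -c^2 x^2 - (1-c)^2, & y \leq -(1-c)/c,\\ -c^2 x^2 + c^2 y^2 + 2c(1-c)\,y, & -(1-c)/c \leq y \leq 1,\\ 1 - c^2 x^2 - (1-c)^2, & y \geq 1. \end{cases}
\end{equation*}
This function is harmonic in the middle strip and coincides with $V$ outside it, so verification of the four properties is short. Inside the strip one has the key algebraic identity $U - V = (cy + 1 - c)^2 \geq 0$, with equality on the lower boundary $y = -(1-c)/c$ (which also ensures continuous pasting); outside the strip $U = V$. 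Moreover $U(x,0) = -c^2 x^2 \leq 0$ and $U_{xx} \equiv -2c^2 \leq 0$. For the superharmonicity, $U_{xx} + U_{yy}$ equals $0$ in the strip and $-2c^2$ outside; a direct check shows $U_y$ is continuous at the lower boundary, while at $y = 1$ it drops from $2c$ to $0$, contributing a negative Dirac mass to $U_{yy}$.

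With $U$ in hand, I would introduce $\tau = \inf\{t \geq 0 : Y_t \geq 1\}$ and apply Itô's formula to $U(X_{t \wedge \tau}, Y_{t \wedge \tau})$. The supermartingale property gives $\EE U(X_{t \wedge \tau}, Y_{t \wedge \tau}) \leq U(X_0, 0) = -c^2 X_0^2 \leq 0$, and combining with the majorization yields $\PP(\tau \leq t) \leq c^2 (\EE (X_{t \wedge \tau})^2 - X_0^2) + (1-c)^2 \leq c^2 \|X\|_2^2 + (1-c)^2$. By continuity of paths and $Y_0 = 0 < 1$, the event $\{\sup_{t \geq 0} Y_t \geq 1\}$ coincides with $\{\tau < \infty\}$, so letting $t \to \infty$ gives the desired bound. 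Optimality of the additive constant $(1-c)^2$ is witnessed by the very example that guided the construction: with $X, Y$ independent Brownian motions from $0$, stopped at $\tau = \inf\{t: Y_t \in \{-(1-c)/c,\,1\}\}$, an exit-probability calculation gives $\PP(Y_\tau = 1) = 1 - c$ and $\EE X_\tau^2 = \EE \tau = (1-c)/c$, so that $c^2 \|X\|_2^2 + (1-c)^2 = c(1-c) + (1-c)^2 = 1-c$ as well.

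The principal technical obstacle is the lack of $C^2$ regularity of $U$ at $y = 1$, where a concave kink appears. This is resolved either by mollifying $U$ away from the kink and passing to the limit, or by invoking an Itô--Tanaka-type formula; the local-time contribution is then easily checked to have the correct sign and to be handled by the stopping at $\tau$. The remaining verifications (integrability of the stochastic integrals, martingale convergence used to pass to $\|X\|_2^2$) are routine in this framework.
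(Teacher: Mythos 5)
Your proposal is correct and follows essentially the same approach as the paper: after the coordinate flip $y\mapsto 1-y$ and an affine normalization, your special function is exactly $U(x,y)=c^2\,\utwoc(x,1-y)-(1-c)^2$, where $\utwoc(x,y)=\utwo(cx,cy)/c^2$ is the paper's function, and your extremal example (planar Brownian motion started at $(0,0)$ and killed on leaving the strip $-(1-c)/c\le y\le 1$) coincides with the paper's strip example. The one point to tighten is that for continuous paths $\{\tau<\infty\}$ is a priori only contained in $\{\sup_{t\ge 0}Y_t\ge 1\}$, not equal to it; as in the paper this is handled by stopping at level $1+\varepsilon$, applying the bound to $X/(1+2\varepsilon)$, $Y/(1+2\varepsilon)$, and letting $\varepsilon\to 0$.
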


The Reader will easily formulate the probabilistic counterparts of Corollaries \ref{corollary-p-1-functions} and \ref{corollary-p-2-functions}.

%%%%%%%%%%%%%%%%%%%%%%%%%%%%%%%%%%%%%%%%%%%%%%%%%%
\section{Special function from \cite{osekowski-one-sided}}
\label{section-osekowskis-function}
%%%%%%%%%%%%%%%%%%%%%%%%%%%%%%%%%%%%%%%%%%%%%%%%%%

Recall that the conformal mapping $K(z) =(\sqrt{z} - 1/{\sqrt{z}})/2$ maps the upper half-plane $H = \RR\times(0,\infty)$ onto the set $H \setminus \{ ai : a \geq 1\}$. The inverse of $K$ is given by the formula $L(z) = 2z^2+1+2z\sqrt{z^2+1}$ (here and below we set $\sqrt{z} = \sqrt{r}e^{i\theta / 2}$ if $z = re^{i\theta}$, $r\geq 0$, $\theta \in (-\pi, \pi]$). 

Introduce the function $\mathcal{U}:H\rightarrow\RR$ given by the Poisson integral
\begin{equation*}
\mathcal{U}(\alpha, \beta) = \frac{1}{\pi} \int_0^{\infty}\frac{\beta\left( 1 - \frac{1}{2}\left|\sqrt{t}-\sqrt{t^{-1}}\right|\right)}{(\alpha -t)^2+\beta^2} d t, \quad (\alpha, \beta)\in H.
\end{equation*}

The special function from \cite{osekowski-one-sided} which was used to prove \eqref{inequality-osekowski} is defined as follows: $U(x,y) = \mathcal{U}(L(x,y))$ for $(x,y)\in H\setminus \{ ai : a \geq 1\}$; $U(x,y) = 1-|x|$ for $(x,y)\in \RR \times(-\infty, 0]$;  $U(0,y) = 0$ for $y\geq 1$. The next lemma sums up its properties (for the proof see \cite[Section 3]{osekowski-one-sided}).

\begin{lemma}\label{lemma-osekowski-special-function}
Function $U:\RR^2\rightarrow\RR$ is continuous, symmetric with respect to the first variable: $U(x,y)=U(-x,y)$, and enjoys the following four properties.
\begin{enumerate}
\item[(i)] For any $x,y\in\RR$ we have $U(x,y)\geq\indic{y\leq 0} - |x|$.
\item[(ii)] For any $x\in\RR$ we have $U(x,1)\leq 0$.
\item[(iii)] For any $y\in\RR$ the function $U(\cdot,y): x\mapsto U(x,y)$ is concave on $\RR$.
\item[(iv)] $U$ is superharmonic.
\end{enumerate}
Moreover the function $(x,y)\mapsto U(x,y)+|x|$ is bounded.
\end{lemma}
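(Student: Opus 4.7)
The plan is to verify the six claims in the order continuity and symmetry, property (iii), properties (i) and (ii) together with the boundedness of $U+|x|$, and finally the superharmonicity (iv), which will be the main obstacle. The backbone of the argument is that $\mathcal{U}$ is, by construction, the Poisson extension to $H$ of the boundary function
$$h(\alpha)=\bigl(1-\tfrac{1}{2}\bigl|\sqrt{\alpha}-1/\sqrt{\alpha}\bigr|\bigr)\indic{\alpha>0},$$
whose only growth is the mild $-\tfrac{1}{2}\sqrt{\alpha}$ at $\infty$ and $-\tfrac{1}{2}\alpha^{-1/2}$ near $0^+$, so that the Poisson integral converges absolutely and $\mathcal{U}$ is harmonic on $H$. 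Composition with the conformal map $L:H\to H\setminus\{ai:a\geq 1\}$ then yields a harmonic function on the slit half-plane, while the formula $1-|x|$ on $\RR\times(-\infty,0]$ is harmonic off the ridge $\{x=0\}$.

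The algebraic heart of the continuity check is that $L(x)=(x+\sqrt{x^2+1})^2$ for $x\in\RR$, giving $\sqrt{L(x)}-1/\sqrt{L(x)}=2x$ and hence $h(L(x))=1-|x|$, which matches the lower-half formula across $\RR$. A direct computation shows $L(0^\pm + bi)=1-2b^2\mp 2b\sqrt{b^2-1}<0$ for $b\geq 1$, a region on which $h$ vanishes, matching the stipulation $U(0,y)=0$ along the slit and at the tip ($L(i)=-1$). The simplest route to symmetry is to define $\tilde U(x,y):=U(-x,y)$ and observe that it is harmonic on the slit half-plane with the same boundary values as $U$ (since $U(x,0)=1-|x|$ is even in $x$ and $U(0,y)=0$); uniqueness of the bounded harmonic extension then gives $\tilde U=U$. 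Equivalently, one can use the inversion identities $\mathcal{U}(1/\bar w)=\mathcal{U}(w)$ (from $h(1/\alpha)=h(\alpha)$ and the substitution $t\mapsto 1/t$ in the Poisson integral) and $L(-\bar z)=1/\overline{L(z)}$ (from $K(1/w)=-K(w)$ combined with Schwarz reflection).

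Property (iii) on $\RR\times(-\infty,0]$ is immediate; on the slit half-plane one differentiates $x\mapsto\mathcal{U}(L(x,y))$ twice using the Cauchy--Riemann equations for $L$ and the explicit Poisson integral, and arranges the result as an integral of a manifestly nonpositive quantity. Property (i) reduces to $U+|x|\geq 0$ off the lower half-plane; equality holds on $\RR\times\{0\}$ and $U+|x|\geq 0$ on the slit, and the inequality extends inward by a direct estimation of the Poisson formula exploiting $h(L(x))=1-|x|$. Boundedness of $U+|x|$ follows from the asymptotic analysis $L(x)\sim 4x^2$ as $x\to+\infty$ and $L(x)\sim(4x^2)^{-1}$ as $x\to-\infty$, both giving $\mathcal{U}(L(x))\sim 1-|x|$. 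Property (ii), $U(x,1)\leq 0$, is the sharpest pointwise bound and is obtained by explicitly parametrizing the curve $L(\{\mathrm{Im}\,z=1\})\subset H$ and showing, via the Poisson formula, that the negative tails of $h$ dominate its positive bump near $\alpha=1$ along this entire curve.

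The main obstacle is the superharmonicity (iv). On the open slit half-plane $U$ is harmonic by construction; on the open lower half-plane $U=1-|x|$ is harmonic off $\{x=0\}$, whose concavity contributes a nonpositive distributional Laplacian; and on the slit $U=0$ is harmonic. Global superharmonicity thus reduces to two gluing conditions across the lower-dimensional sets $\RR\times\{0\}$ and the slit. At the real axis the key task is to show that the inward normal derivative $\partial_y(\mathcal{U}\circ L)(x,0^+)$ is no larger than the corresponding derivative $\partial_y(1-|x|)(x,0^-)=0$: differentiating $\mathcal{U}(L(x,y))$ in $y$ at $y=0$ produces an expression in $\partial_\alpha\mathcal{U}$, $\partial_\beta\mathcal{U}$ and the partials of $L$, and the delicate step is to show this quantity is $\leq 0$ for every $x\in\RR$ (in fact, one expects it to vanish identically on $\RR\setminus\{0\}$, so that $U$ is $C^1$ across the real axis away from the origin, and the only distributional contribution to $\Delta U$ comes from the ridge at $x=0$ in the lower half-plane, which has the correct sign). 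A parallel analysis at the two sides of the slit completes the gluing, and testing $U$ against an arbitrary nonnegative smooth compactly supported $\varphi$ followed by integration by parts delivers $\int U\,\Delta\varphi\,dx\,dy\leq 0$, which is (iv).
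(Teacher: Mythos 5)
The paper does not prove this lemma at all: the sentence immediately preceding it reads ``The next lemma sums up its properties (for the proof see \cite[Section 3]{osekowski-one-sided}).'' The result is imported, so there is no argument in this paper to compare your attempt against; your proposal, correct or not, is doing work the paper deliberately avoids.

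Evaluated on its own, your algebra is right: $L(x)=(x+\sqrt{x^2+1})^2$ for real $x$, hence $h(L(x))=1-|x|$; $L(0^\pm+bi)=1-2b^2\mp 2b\sqrt{b^2-1}<0$ for $b>1$; and the asymptotics $L(x)\sim 4x^2$, $L(-x)\sim(4x^2)^{-1}$ hold. The structural plan (harmonic on each piece, glue by checking the one-sided normal derivative across $\RR\times\{0\}$ and the slit, then treat the ridge $\{x=0,\ y<0\}$ as a concave kink) is the natural one. But the sketch passes over the genuinely hard steps. For (iii), note that $h(\alpha)=1-\tfrac12\bigl|\sqrt\alpha-1/\sqrt\alpha\bigr|$ has $h''(\alpha)=\tfrac18\alpha^{-3/2}+\tfrac38\alpha^{-5/2}>0$ on $(1,\infty)$, so $h$ is convex there and $\mathcal{U}$ is certainly not concave in $\alpha$; writing $L=A+iB$, the quantity $\partial_{xx}(\mathcal{U}\circ L)=\mathcal{U}_{\alpha\alpha}(A_x^2-B_x^2)+2\mathcal{U}_{\alpha\beta}A_xB_x+\mathcal{U}_\alpha A_{xx}+\mathcal{U}_\beta B_{xx}$ is not sign-definite on inspection. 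In \cite{osekowski-one-sided} this concavity is a separate, nontrivial lemma, and the Remark following Lemma \ref{lemma-value-u-0-c} in this very paper advertises a new proof of exactly that lemma --- a clear signal that ``arrange the second derivative as an integral of a manifestly nonpositive quantity'' is a gap, not a computation. Similarly, the identity $\partial_y(\mathcal{U}\circ L)(x,0^+)=0$ for $x\neq 0$, which is the crux of (iv), is asserted rather than derived; your boundedness argument for $U+|x|$ is carried out only on the real axis rather than uniformly over the upper half-plane; and (ii) is attacked by a direct Poisson estimate along $L(\{\operatorname{Im}z=1\})$ when it is in fact an immediate consequence of (iii), symmetry, and $U(0,1)=0$. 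In short: a plausible outline, but several of the central inequalities are name-checked rather than established.
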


We will also need one additional property of the special function $U$.

\begin{lemma}\label{lemma-value-u-0-c}
For $c\in(0,1]$ we have
\begin{equation*}
U(0,c) = 1-\frac{2}{\pi}\arcsin(c)- \frac{2c}{\pi}\ln\left(1/c+\sqrt{1/c^2-1}\right).
\end{equation*}
\end{lemma}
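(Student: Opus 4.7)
The plan is to evaluate the Poisson integral $\mathcal{U}$ at the explicit point $z_0:=L(0,c)$ and read off the value. Substituting $z=ic$ in $L$ gives $L(ic)=1-2c^2+2ic\sqrt{1-c^2}$; writing $c=\sin\theta$ with $\theta\in(0,\pi/2]$ we recognize $z_0=\cos 2\theta+i\sin 2\theta=e^{2i\theta}$, a point on the unit semicircle in $H$ (consistent with $K(e^{2i\theta})=(e^{i\theta}-e^{-i\theta})/2=i\sin\theta=ic$).

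Having pinned down $z_0$, I would exploit that harmonic measure from the centre of the unit disc is normalized arc length. Applying the M\"obius map $\Phi(z)=(z-z_0)/(z-\bar z_0)$, a conformal bijection $H\to D$ with $\Phi(z_0)=0$, gives
\[
U(0,c)=\mathcal{U}(z_0)=\frac{1}{2\pi}\int_0^{2\pi} g\bigl(\Phi^{-1}(e^{i\psi})\bigr)\,d\psi,
\]
where $g(t)=\bigl(1-\tfrac12|\sqrt{t}-1/\sqrt{t}|\bigr)\indic{t>0}$ is the boundary datum of $\mathcal{U}$. A direct calculation with $u:=\psi/2$ yields $\Phi^{-1}(e^{i\psi})=\sin(u-2\theta)/\sin u$, which is positive exactly when $u\in(2\theta,\pi)$; the constant term $1$ in $g$ thus contributes $(\pi-2\theta)/\pi=1-\tfrac{2}{\pi}\arcsin c$.

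For the remaining $\tfrac12|\sqrt{t}-1/\sqrt{t}|$ term I would apply the identity
\[
\sin u\,\sin(u-2\theta)=\sin^2(u-\theta)-\sin^2\theta
\]
to rewrite the integrand as $\sin\theta\,|\cos(u-\theta)|/\sqrt{\sin^2(u-\theta)-\sin^2\theta}$. The substitutions $v=u-\theta$ (with $v\leftrightarrow\pi-v$ symmetry) followed by $w=\sin v$ then reduce matters to $\int_{\sin\theta}^1 dw/\sqrt{w^2-\sin^2\theta}=\ln((1+\cos\theta)/\sin\theta)=\ln\bigl(1/c+\sqrt{1/c^2-1}\bigr)$, and assembling the two pieces produces the claimed formula. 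The boundary case $c=1$ is handled separately: both sides vanish. The only non-bookkeeping ingredient is spotting the trigonometric identity displayed above; once it is in hand, the ugly-looking integrand collapses under a standard substitution and the rest is routine.
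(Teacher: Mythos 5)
Your proposal is correct, and it takes a genuinely different route from the paper. The paper's proof works directly with the Poisson integral on the half-plane: the arctangent term comes from integrating the constant $1$ over $(0,\infty)$ against the Poisson kernel at $(A,B)=L(0,c)$, and the remaining integral is halved by the substitution $t\mapsto 1/t$ (which leaves the kernel invariant precisely because $A^2+B^2=1$), then reduced to a rational integral by $t=s^2$, and finally simplified by plugging in $A=1-2c^2$, $B=2c\sqrt{1-c^2}$. Your approach instead observes from the start that $L(0,c)=e^{2i\theta}$ with $c=\sin\theta$, then pulls the Poisson integral back to the disc via the M\"obius map $\Phi(z)=(z-z_0)/(z-\bar z_0)$, using conformal invariance of harmonic measure to turn $\mathcal U(z_0)$ into a normalized arclength average. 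The explicit boundary correspondence $\Phi^{-1}(e^{i\psi})=\sin(u-2\theta)/\sin u$ with $u=\psi/2$, combined with the identity $\sin u\,\sin(u-2\theta)=\sin^2(u-\theta)-\sin^2\theta$, makes the integrand collapse under $v=u-\theta$, $w=\sin v$ to $\int_{\sin\theta}^1 dw/\sqrt{w^2-\sin^2\theta}$. Both proofs exploit the fact that $L(0,c)$ lies on the unit circle — the paper through the $t\leftrightarrow 1/t$ symmetry of the kernel, you through the trigonometric parametrization $e^{2i\theta}$ — but otherwise the computations are organized quite differently. Your route is arguably cleaner and more geometrically transparent: it replaces the quartic rational integral by a standard $\ln$-form after two natural substitutions, and the arctangent contribution $1-\frac{2}{\pi}\arcsin c$ appears immediately as a normalized arc length $(\pi-2\theta)/\pi$ rather than via an $\arctan(A/B)$ that then needs a trigonometric identity to be simplified.
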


\begin{proof} For $c=1$ we see from the definition that $U(0,1) = 0$. Fix therefore a $c\in(0,1)$ and denote $A = 1-2c^2$, $B = 2c\sqrt{1-c^2}$. Then $L(0,c)=(A, B)$ and $A^2+B^2=1$. From the definition of $U$ in the upper half-plane 
\begin{align*}
U(0,c) =  \frac{\pi+2\arctan\left(\frac{A}{B}\right)}{2\pi} - \frac{B}{2\pi} \int_0^{\infty}\frac{\left|\sqrt{t}-\sqrt{t^{-1}}\right|}{(A-t)^2+B^2} d t.
\end{align*}
Notice that if we change the interval of integration in the remaining integral to $(1,\infty)$, then we will get exactly one half of the integral's value (since we can substitute $1/t$ instead of $t$ for $t\in(0,1)$ and use  $A^2+B^2=1$). We then substitute $t=s^2$, $1<t<\infty$, and what is left is to integrate a rational function.
\begin{align*}
\frac{B}{2\pi} \int_0^{\infty}\frac{\left|\sqrt{t}-\sqrt{t^{-1}}\right|}{(A-t)^2+B^2} d t & =  \frac{B}{\pi} \int_1^{\infty}\frac{ \sqrt{t}-\sqrt{t^{-1}}}{(A-t)^2+B^2} d t =	
\frac{2B}{\pi} \int_1^{\infty}\frac{s^2-1}{s^4-2As+1} d s \\
& =  \frac{B}{\pi\sqrt{2(1+A)}}\ln\left(\frac{ 2+\sqrt{2(1+A)}}{ 2-\sqrt{2(1+A)}}\right).
\end{align*}
Now we can substitute the values of $A$ and $B$ and use simple algebraic and trigonometric identities (remembering that $A^2+B^2=1$) to get 
\begin{align*}
U(0,c)  & =  \frac{\pi+2\arctan\left(\frac{1-2c^2}{2c\sqrt{1-c^2}}\right)}{2\pi} -  \frac{c}{\pi}\ln\left(\frac{ 1+\sqrt{1-c^2}}{1-\sqrt{1-c^2}}\right) \\
 %= & \frac{1}{2}+\frac{1}{\pi}\arcsin(1-2c^2) -  \frac{c}{\pi}\ln\left( \frac{\left( 1+\sqrt{1-c^2}\right)^2}{c^2}\right)\\
 & = 1-\frac{2}{\pi}\arcsin(c) -  \frac{2c}{\pi}\ln\left( 1/c+\sqrt{1/c^2-1}\right),
\end{align*} which is the assertion of the Lemma.
\end{proof}

\begin{remark} Lemma \ref{lemma-value-u-0-c}  allows us to brute-force check that function $U(0,\cdot)$ is convex on $[0,\infty)$ simply by calculating its second derivative (an observation from the Proof of Corollary \ref{corollary-p-1-functions} in the next section simplifies the computations). This provides a new proof of \cite[Lemma 3.2]{osekowski-one-sided} needed to prove property (iii) from Lemma \ref{lemma-osekowski-special-function} above.
\end{remark}

%%%%%%%%%%%%%%%%%%%%%%%%%%%%%%%%%%%%%%%%%%%%%%%%%%
\section{Proofs of \eqref{inequality-p-1-martingales} and \eqref{inequality-p-1-functions}}
\label{section-proof-p-1}
%%%%%%%%%%%%%%%%%%%%%%%%%%%%%%%%%%%%%%%%%%%%%%%%%%

\begin{proof}[Proof of \eqref{inequality-p-1-martingales}.] Fix any $c\in(0,1)$ ($c=1$ corresponds to inequality \eqref{inequality-osekowski} and the limit case $c=0$ to the trivial estimate of probability by $1$) and introduce the function $\uonec(x,y) = U(cx,cy)/c$. Due to Lemma \ref{lemma-osekowski-special-function} this function is continuous, symmetric with respect to the first variable, and enjoys the following properties.
\begin{enumerate}
\item[(i)] For any $x,y\in\RR$ we have $\uonec(x,y)\geq\frac{1}{c}\indic{y\leq 0} - |x|$.
\item[(ii)] For any $x\in\RR$ we have $\uonec(x,1)\leq \uonec(0,1) =  U(0,c)/c$.
\item[(iii)] For any $y\in\RR$ the function $\uonec(\cdot,y): x\mapsto \uonec(x,y)$ is concave on $\RR$.
\item[(iv)] $\uonec$ is superharmonic.
\end{enumerate}
We now mimic the proof of inequality \eqref{inequality-osekowski}.

First note that $\uonec$ is not smooth since  $\uonec(x,y) = 1/c-|x|$ for $y\leq 0$. Let $g:\RR^2\rightarrow [0,\infty)$ be a smooth radial function supported on a ball of center $(0,0)$ and radius $1$, satisfying $\int_{\RR^2} g = 1$. For $\delta>0$ define
\begin{equation*}
\uonec^{\delta}(x,y) = \int_{\RR^2} \uonec(x+\delta r, y+\delta s) g(r,s) d r d s.
\end{equation*}
Function $\uonec^{\delta}$ inherits properties (i) -- (iv) in a slightly changed form. For example, (i) implies that
\begin{align*}
\uonec^{\delta}(x,y) \geq & \int_{\RR^2} \indic{y+\delta s\leq 0} g(r,s) d r d s - \int_{\RR^2} |x+\delta r|g(r,s) d r d s\\
\geq & \indic{y\leq -\delta} - (|x| + \delta).
\end{align*}
We also have $\uonec^{\delta}(x,1)\leq\uonec(x,1)\leq U(0,c)/c$, since $\uonec$ is superharmonic and $g$ is radial. Properties (iii) and (iv) transfer directly to function $\uonec^{\delta}$. Summarizing, function $\uonec^{\delta}$ is smooth, symmetric with respect to the first variable, and enjoys the following properties.
\begin{enumerate}
\item[\itilde] For any $x,y\in\RR$ we have $\uonec^{\delta}(x,y)\geq\frac{1}{c}\indic{y\leq -\delta} -(|x|+\delta)$.
\item[\iitilde] For any $x\in\RR$ we have $\uonec^{\delta}(x,1)\leq U(0,c)/c$.
\item[\iiitilde] For any $y\in\RR$ the function $\uonec^{\delta}(\cdot,y): x\mapsto \uonec(x,y)$ is concave on $\RR$.
\item[\ivtilde] $\uonec^{\delta}$ is superharmonic.
\end{enumerate}

Let $X, Y$ be martingales as in the statement, localized if necessary in order to guarantee the integrability of all random variables below. Let $\tau = \inf\{t\geq 0 : Y_t\geq 1+\varepsilon\}$. The It\^{o}'s formula gives
\begin{equation*}
\uonec^{\delta}(X_{\tau\wedge  t},1-Y_{\tau\wedge  t}) = \uonec^{\delta}(X_0,1-Y_0) + I_1 + I_2/2,
\end{equation*}
where
\begin{align*}
I_1 = & \int_{0+}^{\tau\wedge  t} \uonec^{\delta}_x(X_s, 1- Y_s) d X_s - \int_{0+}^{\tau\wedge  t} \uonec^{\delta}_y(X_s,1- Y_s) d Y_s,\\
I_2 = & \int_{0+}^{\tau\wedge  t} \uonec^{\delta}_{xx}(X_s, 1- Y_s) d [X]_s\\
 & - 2\int_{0+}^{\tau\wedge  t} \uonec^{\delta}_{xy}(X_s,1- Y_s) d [X, Y]_s + \int_{0+}^{\tau\wedge  t} \uonec^{\delta}_{yy}(X_s, 1- Y_s) d [Y]_s.
\end{align*}
We will estimate the expected values of the above sums separately.

We have $Y_0=0$, so  \iitilde{} implies that $\uonec^{\delta}(X_0,1-Y_0) \leq U(0,c)/c$. Moreover $\EE I_1 = 0 $, since the stochastic integrals in this sum are martingales. The middle term in $I_2$ vanishes, because $X$ and $Y$ are orthogonal. Using $\uonec^{\delta}_{xx}\leq 0$ (which follows from \iiitilde{}), the subordination of $Y$ to $X$, and property \ivtilde{} we see that $I_2\leq 0$.

After taking the expected value of both sides we arrive at $\EE \uonec^{\delta}(X_{\tau\wedge  t},1-Y_{\tau\wedge  t}) \leq U(0,c)/c$. Hence, by  \itilde{},
\begin{equation*}
\PP(Y_{\tau\wedge  t} \geq 1+\delta) \leq c\EE (|X	_{\tau\wedge  t}| +\delta) + U(0,c).
\end{equation*}

Letting $\delta\rightarrow 0$ we see that $\PP(\sup_{t\geq 0} Y_{\tau\wedge t} > 1) \leq c\EE |X_{\tau\wedge  t}| + U(0,c)\leq c\|X\|_1 + U(0,c)$. Therefore
\begin{equation*}
\PP(\sup_{t\geq 0} Y_{t} \geq 1+2\varepsilon) \leq \lim_{\substack t\rightarrow \infty} \PP(\sup_{t\geq 0} Y_{\tau\wedge t} \geq 1) \leq c\|X\|_1 + U(0,c).
\end{equation*}
After applying this bound to $X/(1+2\varepsilon)$, $Y/(1+2\varepsilon)$ and then letting $\varepsilon\rightarrow 0$ we finally conclude that
\begin{equation*}
\PP(\sup_{t\geq 0} Y_{t} \geq 1) \leq c\|X\|_1 + U(0,c).
\end{equation*}
By Lemma \ref{lemma-value-u-0-c} this finishes the proof of \eqref{inequality-p-1-martingales}.
\end{proof}

\begin{proof}[Proof of \eqref{inequality-p-1-functions}.]
Let $B$ be a planar Brownian motion starting from $0\in\CC$ and let $\tau=\inf\{t\geq 0 : |B_t| = 1 \}$. Denote the harmonic extensions of $f$ and $\hilbcircle f$ to the unit disk by $u$ and $v$ respectively. They satisfy the Cauchy-Riemann equations and $v(0)=0$. It follows from the It\^{o}'s formula that the martingales $X= (u(B_{\tau\wedge t}))_{t\geq 0}$ and $Y= (v(B_{\tau\wedge t}))_{t\geq 0}$ are orthogonal, $Y$ is differentially subordinate to $X$ and $Y_0 = 0$. Moreover $B_{\tau}$ is distributed uniformly on the unit circle. Therefore the martingale inequality \eqref{inequality-p-1-martingales} applied to $X$ and $Y$ gives us inequality \eqref{inequality-p-1-functions}.
\end{proof}

%%%%%%%%%%%%%%%%%%%%%%%%%%%%%%%%%%%%%%%%%%%%%%%%%%
\section{Sharpness of \eqref{inequality-p-1-functions} and \eqref{inequality-p-1-martingales}, proof of Corollary \ref{corollary-p-1-functions}}
\label{section-sharpness-p-1}
%%%%%%%%%%%%%%%%%%%%%%%%%%%%%%%%%%%%%%%%%%%%%%%%%%

\begin{proof}[Sharpness of \eqref{inequality-p-1-martingales}.] Let $X=(X_t)_{t\geq 0}$, $Y = (Y_t)_{t\geq 0}$ be such that $(X,1-Y)$ is a planar Brownian motion starting from point $(0,1)$ and killed upon hitting the boundary of the set $H \setminus \{ ai : a \geq 1/c\}$. Then $X$, $Y$ are orthogonal martingales, $Y$ is subordinate to $X$ and $Y_0=0$.

Introduce the stopping time $\tau = \inf\{t>0 : 1-Y_t = 0\}$.  
The process $(\uonec(X_t, 1-Y_t))_{t\geq 0}$  is a martingale of mean $\uonec(0,1) = U(0, c)/c > 0$ and hence
\begin{align*}
U( 0,c)  & = c\EE{}  \uonec(X_t, 1-Y_t) = \EE \left[(1 - c|X_{t}|)\indic{\tau \leq  t} +  U(c X_{ t} , c(1-Y_t)) \indic{ \tau > t}\right] \\
 & =  \PP(\tau \leq  t) - c\EE|X_{t}| + \EE\left( U(  c X_{ t},  c (1-Y_{ t})) +c|X_{ t}| \right)\indic{\tau> t }.
\end{align*}
We now let $t\rightarrow\infty$. On the set $\{ \tau = \infty\}$ we almost surely have $\lim_{\substack t\rightarrow\infty} X_t = 0$ and $\lim_{\substack t\rightarrow\infty} c(1-Y_t)\geq 1$, hence $\lim_{\substack t\rightarrow\infty} U(cX_t,c(1-Y_t)) + c|X_t| = 0$. From Lemma \ref{lemma-osekowski-special-function} we know that the function $(x,y)\mapsto U(x,y) + |x|$ is bounded, so by the dominated convergence theorem the third term in the above sum vanishes. Moreover the expected values $\EE |X_t|$ converge monotonically to $\|X\|_1$ since $X$ is a martingale. Hence
\begin{equation*}
U(0,c) = \PP(\tau <\infty) - c\|X\|_1 =\PP(\sup_{\substack t\geq 0} Y_t \geq 1) - c\|X\|_1.
\end{equation*}
This (combined with Lemma \ref{lemma-value-u-0-c}) ends the proof of sharpness of \eqref{inequality-p-1-martingales}.
\end{proof}

\begin{remark} In the above example we can calculate $\PP(\sup_{\substack t\geq 0} Y_t \geq 1)$ and $\|X\|_1$ explicitly. We know that $z\mapsto L(cz)$ maps the set $H\setminus \{ ai : a \geq 1/c\}$  to the upper half-plane. The expression $\PP(\sup_{\substack t\geq 0} Y_t \geq 1)$ is equal to the probability that  the Brownian motion starting from point $L(0,c) = (1-2c^2,2c\sqrt{1-c^2})$ and killed upon hitting the boundary of $H$ will terminate on $(0,\infty)\times\{0\}$ and is therefore equal to 
\begin{align*}
\frac{\pi+2\arctan\left(\frac{1-2c^2}{2c\sqrt{1-c^2}}\right)}{2\pi} = 1-\frac{2}{\pi}\arcsin(c)
\end{align*} (see e.g. \cite{ransford} and compare with the calculations in the proof of Lemma \ref{lemma-value-u-0-c}). Hence
\begin{align*}
\|X\|_1 = \frac{U(0,c) - (1-2\arcsin(c)/\pi)}{c} = \frac{2}{\pi}\ln\left( 1/c+\sqrt{1/c^2-1}\right).
\end{align*}
\end{remark}

\begin{proof}[Sharpness of \eqref{inequality-p-1-functions}.] 
Consider a conformal mapping $N : H\setminus\{ai : a\geq 1/c\}\rightarrow D(0,1)$ which maps the set $H\setminus\{ai : a\geq 1/c\}$ onto the open unit disk and satisfies $N(0,1) = (0,0)$ ($N$ is the composition of the mapping $z\mapsto L(cz)$, which maps the set $H\setminus\{ai : a\geq 1/c\}$ onto the upper half-plane, and the homography  
\begin{equation*}
z\mapsto -2\left(\frac{z-\operatorname{Re} L(ci)}{\operatorname{Im} L(ci)} +i\right)^{-1}-i,
\end{equation*}
which maps the upper half-plane onto the unit disk). We define two conjugate harmonic functions: for $r\in[0,1), \ t\in(-\pi, \pi]$ we set $u(re^{it}) = \operatorname{Re} \ N^{-1} (re^{it})$, $v(re^{it}) = 1-\operatorname{Im} N^{-1} (re^{it})$. Functions $u, v$ have radial limits almost surely and the Hilbert transform of the function
\begin{equation*}
f(e^{it}) = \lim_{\substack r\rightarrow 1^{-}} u(re^{it})
\end{equation*}
is the function
\begin{equation*}
g(e^{it}) = \lim_{\substack r\rightarrow 1^{-}} v(re^{it}).
\end{equation*}
Moreover, $| \{ \xi\in\TT : g (\xi) \geq 1 \} | /(2\pi) =  \PP(\sup_{\substack t\geq 0} Y_t \geq 1)$ and $\| f \|_1 = \|X\|_1$, where $X$, $Y$ are like in the above proof of sharpness of \eqref{inequality-p-1-martingales}. This finishes the proof of sharpness of \eqref{inequality-p-1-functions}.
\end{proof}

\begin{proof}[Proof of Corollary \ref{corollary-p-1-functions}.] For $c\in(0,1)$ denote $P(c) = 1-2\arcsin(c)/\pi$, $E(c) =2 \ln (1/c+\sqrt{1/c^2-1})/\pi$, so that $U(0,c) = P(c) - cE(c)$. A bit lengthy computation reveals that we identically have $P'(c) - cE'(c)=0$. Hence, for a given function $f\in\lone$ the derivative of the right-hand side of \eqref{inequality-p-1-functions} with respect to $c\in[0,1]$ is equal to zero whenever $\|f\|_1 -E(c) = 0$, that is if $c = 2\exp(\frac{\pi}{2}\|f\|_1)/(1 + \exp(\pi\|f\|_1))$. Substituting such $c$ into \eqref{inequality-p-1-functions} reduces it to 
\begin{equation*}
\frac{1}{2\pi}\left| \left\{ \xi\in\TT : \hilbcircle f (\xi) \geq 1 \right\} \right|\leq c\|f\|_1 + P(c) - cE(c) = P(c),
\end{equation*}
which can be further simplified to 
\begin{equation*}
\frac{1}{2\pi}\left| \left\{ \xi\in\TT : \hilbcircle f (\xi) \geq 1 \right\} \right|\leq \frac{4}{\pi} \arctan \left(\exp\left(  \frac{\pi}{2}\|f\|_1\right)\right) -1.
\end{equation*}
As for sharpness, it is enough to take $f$ which is optimal in \eqref{inequality-p-1-functions} for the value of $c$ we have chosen.
\end{proof}

%%%%%%%%%%%%%%%%%%%%%%%%%%%%%%%%%%%%%%%%%%%%%%%%%%
\section{Sketch of proofs of \eqref{inequality-p-2-martingales}, \eqref{inequality-p-2-functions} and \eqref{inequality-corollary-p-2-functions}}
\label{section-p-2}
%%%%%%%%%%%%%%%%%%%%%%%%%%%%%%%%%%%%%%%%%%%%%%%%%%

The proofs of  inequalities \eqref{inequality-p-2-martingales}, \eqref{inequality-p-2-functions} and \eqref{inequality-corollary-p-2-functions} are analogous to those of inequalities \eqref{inequality-p-1-martingales}, \eqref{inequality-p-1-functions} and \eqref{inequality-corollary-p-1-functions}, but we have to exploit properties of a different special function. Consider the function $\utwo:\RR^2\rightarrow\RR$ defined by the formula
\begin{equation*}
 \utwo(x,y) =
  \begin{cases}
   1- x^2 			& \text{if } y \geq 0, \\
   (1-y)^2- x^2	& \text{if } 0 < y\leq 1, \\
   -x^2	     		& \text{if } y > 1.
  \end{cases}
\end{equation*}
 It is continuous and clearly enjoys the following properties.
\begin{enumerate}
\item[(i)] For any $x,y\in\RR$ we have $\utwo(x,y)\geq\indic{y\leq 0} - x^2$.
\item[(ii)] For any $x\in\RR$ we have $\utwo(x,1)\leq 0$.
\item[(iii)] For any $y\in\RR$ the function $\utwo(\cdot,y): x\mapsto U(x,y)$ is concave on $\RR$.
\item[(iv)] $\utwo$ is superharmonic.
\end{enumerate}
To prove inequality \eqref{inequality-p-2-martingales} we have to use the function $\utwoc:\RR^2\rightarrow\RR$ defined as $\utwoc (x,y) = \utwo(cx,cy)/c^2$ and follow the reasoning from Section \ref{section-proof-p-1}.

As for sharpness, let $X=(X_t)_{t\geq 0}$, $Y = (Y_t)_{t\geq 0}$ be such that $(X,1-Y)$ is a planar Brownian motion starting from point $(0,1)$ and killed upon hitting the boundary of the strip $\{ (x,y) : 0 \leq y \leq 1/c\}$. Then $X$, $Y$ are orthogonal martingales, $Y$ is subordinate to $X$ and $Y_0=0$.   Similarly as in Section \ref{section-sharpness-p-1}, it is enough to apply the Doob's optional sampling theorem to the martingale $(\utwoc(X_t, 1-Y_t))_{t\geq 0}$ which has mean $\utwoc(0,1) = \utwo(0, c)/c^2 = (1-c)^2/c^2 > 0$.

The calculations which occur during the proof of Corollary \ref{corollary-p-2-functions} are also straightforward. We leave the details to the Reader.

%%%%%%%%%%%%%%%%%%%%%%%%%%%%%%%%%%%%%%%%%%%%%%%%%%
\section{An application}\label{section-applications}
%%%%%%%%%%%%%%%%%%%%%%%%%%%%%%%%%%%%%%%%%%%%%%%%%%

For $0<q\leq 1$ and $0<q'\leq 2$ define
\begin{align*}
c(1,q) & = \sup_{\substack x>0} \frac{\left(\frac{4}{\pi}\arctan\left(\exp\left(\frac{\pi}{2} x \right)\right)  - 1\right)^{1/q}}{x},\\
c(2,q') & = \sup_{\substack x>0} \frac{1}{x}\left(\frac{x^2}{1+x^2}\right)^{1/q'} = \left(\frac{q'}{2}\right) ^{1/q}\left(\frac{2}{q'} - 1\right)^{1/q'-1/2}.
\end{align*}

\begin{corollary}\label{corollary-p-1-q-functions} For $0<q\leq 1$ and for every function $f\in \lone$ we have
\begin{equation}\label{inequality-corollary-p-1-q-functions}
\left(\frac{1}{2\pi}\left| \left\{ \xi\in\TT : \hilbcircle f (\xi) \geq 1 \right\} \right| \right)^{1/q} \leq c(1,q) \|f\|_1.
\end{equation}
\end{corollary}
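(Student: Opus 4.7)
The strategy is to derive \eqref{inequality-corollary-p-1-q-functions} as a one-line consequence of Corollary \ref{corollary-p-1-functions}. Starting from
\[
\frac{1}{2\pi}\left| \left\{ \xi\in\TT : \hilbcircle f (\xi) \geq 1 \right\} \right| \leq \frac{4}{\pi} \arctan \left( \exp\left( \frac{\pi}{2}\| f \|_1\right) \right) - 1,
\]
I would raise both sides to the power $1/q$ (permissible since both sides are nonnegative) and, in the nontrivial case $\|f\|_1>0$, divide by $\|f\|_1$. Since $c(1,q)$ is, by definition, the supremum over $x>0$ of exactly this ratio with $x$ in place of $\|f\|_1$, evaluating at $x=\|f\|_1$ gives
\[
\left( \tfrac{4}{\pi} \arctan\left(\exp\left(\tfrac{\pi}{2}\|f\|_1\right)\right) - 1 \right)^{1/q} \leq c(1,q)\,\|f\|_1,
\]
and combining with the first display yields \eqref{inequality-corollary-p-1-q-functions}. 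The case $\|f\|_1=0$ is trivial: the right-hand side of Corollary \ref{corollary-p-1-functions} then vanishes, forcing the set on the left-hand side of \eqref{inequality-corollary-p-1-q-functions} to have measure zero.

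The one point that really needs attention is verifying that $c(1,q)<\infty$, so that the inequality is not vacuous. I would analyze the ratio $\left(\tfrac{4}{\pi}\arctan(\exp(\tfrac{\pi}{2}x))-1\right)^{1/q}/x$ at both endpoints of $(0,\infty)$. A short computation gives $\bigl(\tfrac{4}{\pi}\arctan(\exp(\tfrac{\pi}{2}x))\bigr)' = \operatorname{sech}(\pi x/2)$, which equals $1$ at $x=0$ and has vanishing derivative there, so $\tfrac{4}{\pi}\arctan(\exp(\tfrac{\pi}{2}x))-1 = x + O(x^{3})$ near $0$. Hence the ratio behaves like $x^{1/q-1}$, tending to $0$ as $x\to 0^+$ when $q<1$ and to $1$ when $q=1$. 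As $x\to\infty$, the numerator is uniformly bounded (since $\arctan(\exp(\tfrac{\pi}{2}x))\to \pi/2$), so the ratio decays like $x^{-1}$. Continuity on $(0,\infty)$ together with these two finite limits implies that the supremum is finite.

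No substantive obstacle is anticipated: the corollary amounts to a straightforward rewriting of Corollary \ref{corollary-p-1-functions} in the language of weak-type $(1,q)$ bounds, and the only analytic input is the asymptotic check sketched above.
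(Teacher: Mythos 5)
Your argument is correct and matches the paper's proof: both derive the bound by applying Corollary \ref{corollary-p-1-functions} and then invoking the definition of $c(1,q)$ as the relevant supremum. The only addition is your explicit verification that $c(1,q)<\infty$ via the asymptotics of the ratio at $0$ and $\infty$, which the paper leaves implicit (it is used later in the sharpness discussion) but is a sound check; note also that the corollary as stated does not include the sharpness claim, which the paper addresses separately, so your proof covers the full statement.
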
 

\begin{corollary}\label{corollary-p-2-q-functions} For $0<q\leq 2$  and for every function $f\in \ltwo$ we have
\begin{equation}\label{inequality-corollary-p-2-q-functions}
\left(\frac{1}{2\pi}\left| \left\{ \xi\in\TT : \hilbcircle f (\xi) \geq 1 \right\} \right| \right)^{1/q} \leq c(2,q) \|f\|_2.
\end{equation}
\end{corollary}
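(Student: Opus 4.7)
The plan is to deduce Corollary \ref{corollary-p-2-q-functions} directly from the sharp weak-type $(2,2)$ bound in Corollary \ref{corollary-p-2-functions} and then to verify the closed-form evaluation of the constant $c(2,q)$ by a one-parameter optimization. First, starting from \eqref{inequality-corollary-p-2-functions}, I would raise both sides to the power $1/q$:
$$\left(\frac{1}{2\pi}\left|\left\{\xi\in\TT : \hilbcircle f(\xi)\geq 1\right\}\right|\right)^{1/q} \leq \left(\frac{\|f\|_2^2}{1+\|f\|_2^2}\right)^{1/q}.$$
Writing $x=\|f\|_2$ (the case $f=0$ is trivial), the right-hand side equals $\|f\|_2\cdot h(x)$ with $h(x)=x^{-1}\bigl(x^2/(1+x^2)\bigr)^{1/q}$, and by definition $h(x)\leq c(2,q)$ for every $x>0$. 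This already yields inequality \eqref{inequality-corollary-p-2-q-functions}; what remains is the explicit formula for $c(2,q)$.

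For the closed-form evaluation, I would rewrite $h(x)=x^{2/q-1}(1+x^2)^{-1/q}$ and differentiate $\log h$. The critical-point equation $(2/q-1)/x = 2x/(q(1+x^2))$ simplifies to $x^2 = 2/q-1$. For $q\in(0,2)$ this has a unique positive solution, and since $h$ vanishes at both ends of $(0,\infty)$ (the behavior at $0$ requires $q<2$), the critical point is necessarily the global maximum. Substituting the optimal $x$ back into $h$ and using the identity
$$\frac{x^2}{1+x^2} = 1-\frac{q}{2} = \frac{q}{2}\left(\frac{2}{q}-1\right)$$
yields
$$c(2,q) = \left(\frac{2}{q}-1\right)^{-1/2}\left(\frac{q}{2}\right)^{1/q}\left(\frac{2}{q}-1\right)^{1/q} = \left(\frac{q}{2}\right)^{1/q}\left(\frac{2}{q}-1\right)^{1/q-1/2},$$
which is the claimed value. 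The boundary case $q=2$ is handled as a limit: the supremum equals $1$ and is attained as $x\to 0^+$, consistent with the convention $0^0=1$.

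I do not expect any serious obstacle here; the argument is essentially a routine one-variable optimization sitting on top of Corollary \ref{corollary-p-2-functions}, and the analogous deduction of Corollary \ref{corollary-p-1-q-functions} from Corollary \ref{corollary-p-1-functions} proceeds in exactly the same way (the only difference being that for the $L^1$ case the resulting supremum $c(1,q)$ does not seem to admit a comparably clean closed form, which is presumably why the statement of Corollary \ref{corollary-p-1-q-functions} leaves $c(1,q)$ in variational form).
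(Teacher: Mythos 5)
Your proof is correct and takes essentially the same route as the paper: Corollary \ref{corollary-p-2-q-functions} is obtained by raising both sides of \eqref{inequality-corollary-p-2-functions} to the power $1/q$ and invoking the definition of $c(2,q)$ as a supremum, exactly as the paper does for Corollary \ref{corollary-p-1-q-functions} (whose proof Corollary \ref{corollary-p-2-q-functions} is said to mimic). Your additional verification of the closed-form value of $c(2,q)$, including the critical point $x^2 = 2/q - 1$ and the boundary behavior at $q=2$, is accurate and fills in a computation the paper states without proof.
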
 

\begin{remark} Both above inequalities are sharp. Moreover, for $q>1$ inequality \eqref{inequality-corollary-p-1-q-functions} does not hold with any constant $c(1,q) < \infty$, since we know from Section \ref{section-sharpness-p-1} that there exist functions for which both sides of the inequality \eqref{inequality-osekowski} tend to zero, but their quotient tends to one. Similarly, it makes no sense to consider \eqref{inequality-corollary-p-2-q-functions} for $q>2$.
\end{remark}

\begin{proof}[Proof of Corollary \ref{corollary-p-1-q-functions}.] Using Corollary \ref{corollary-p-1-functions} and the definition of $c(1,q)$ we get
\begin{align*}
\left(\frac{1}{2\pi}\left| \left\{ \xi\in\TT : \hilbcircle f (\xi) \geq 1 \right\} \right|\right)^{1/q} &\leq \left(\frac{4}{\pi}\arctan\left(\exp\left(\frac{\pi}{2} \|f\|_1 \right)\right)  - 1\right)^{1/q}\\
&\leq c(1,q) \|f\|_1.
\end{align*}
Moreover the function
\begin{equation*}
x\mapsto \frac{\left(\frac{4}{\pi}\arctan\left(\exp\left(\frac{\pi}{2} x \right)\right)  - 1\right)^{1/q}}{x},\quad x>0,
\end{equation*}
attains its maximum at some  positive $x=x_1$. By Corollary \ref{corollary-p-1-functions}  there exists a function $f\in\lone$ for which $\|f\|_1=x_1$ and
\begin{align*}
\frac{1}{2\pi}\left| \left\{ \xi\in\TT : \hilbcircle f (\xi) \geq 1 \right\} \right| =
\frac{4}{\pi}\arctan\left(\exp\left(\frac{\pi}{2} x_1	 \right)\right)  - 1.\end{align*}
This finishes the proof of optimality of the obtained inequality.\end{proof}

The proof Corollary \ref{corollary-p-2-q-functions} follows exactly the same lines and therefore we skip it.

%%%%%%%%%%%%%%%%%%%%%%%%%%%%%%%%%%%%%%%%%%%%%%%%%%
\section{Final remarks}
\label{section-final-remarks}
%%%%%%%%%%%%%%%%%%%%%%%%%%%%%%%%%%%%%%%%%%%%%%%%%%

We start this section by formulating an open problem: for a given $p\in(1,2)$ identify the best constant $c(p)$ in an $L^p$-version of \eqref{inequality-osekowski}
\begin{equation}\label{inequality-p-p-functions}
\frac{1}{2\pi}|\{ \xi\in\TT : \hilbcircle f(\xi) \geq 1 \}| \leq c(p) \|f\|_p^p.
\end{equation}
It would be even more interesting to find the sharp counterpart of estimate \eqref{inequality-corollary-p-1-functions}, that is to determine for each $p\in(1,2)$ and any positive $c$ (smaller than $c(p)$) the best constant $d(p,c)$, such that
\begin{equation*}
\frac{1}{2\pi}|\{ \xi\in\TT : \hilbcircle f(\xi) \geq 1 \}| \leq c \|f\|_p^p +d(p,c).
\end{equation*}

The author believes that for $p\in (1, 2)$ the value of $c(p)$ is $1$, but he was unable to solve the above problems. Nevertheless, we will  try to shed some light on them.

For $p\in[1,2]$ and $x\geq 0$ define
\begin{equation*}
R_p(x) = \sup \left\{ \frac{1}{2\pi}|\{ \xi\in\TT : \hilbcircle f(\xi) \geq 1 \}| \ : \ \|f\|^p_p\leq x\right\}.\end{equation*}
In the previous sections we have shown that $R_1(x) = \frac{4}{\pi}\arctan\exp(\frac{\pi}{2}x) - 1$ and $R_2(x) = x/(1+x)$. One can moreover check that $R_1\geq R_2$ (for example by comparison of derivatives of both sides).

If we knew that for any given $x\geq 0$ the function $p\mapsto R_p(x)$ is nonincreasing  with respect to the parameter $p\in[1,2]$,  then we could conclude that $c(p)=1$ for $p\in[1,2]$. Indeed, from $x\geq R_1(x)\geq R_p(x)$ we would get $c(p)\leq 1$. Moreover, from $R_p\geq R_2$ and the fact $R_2$ that is strictly convex and tangent to the linear function with slope $1$ it would follow that we cannot have $c(p)< 1$.

\begin{remark} For $x\geq 1$ the function $p\mapsto R_p(x)$ is monotonous. Indeed, for $ p\in[1,2] $ and $x\geq 1$ we have
\begin{equation*}
R_1(x) \geq R_1(x^{1/p}) \geq R_p(x) \geq R_2 ( x^{2/p}) \geq R_2(x),
\end{equation*}
where we used monotonicity of $R_1, R_2$ in the first and last inequality, and Jensen's inequality (if $\|f\|_2^2\geq x^{2/p}$, then $\|f\|_p^p\leq x$; if $\|f\|_p^p\leq x$, then $\|f\|_1\leq x$) in the two middle passages.
Unfortunately, in order to find $c(p)$ it is crucial to control $R_p(x)$ for small $x$.
\end{remark}

\begin{remark} We would be able to prove inequality \eqref{inequality-p-p-functions} if we found a
superharmonic function $U_p$ on the plane, satisfying
\begin{equation*}
U_p(x,y) \geq \indic{y\geq 0} - c(p) |x|^p
\end{equation*}
and $U_p(0,1)\leq 0$. Recall that for $p = 1$ the function was equal
to the function $(x, y)\mapsto \indic{y\geq 0} - |x|$ on the set $\{(x,y) : y\geq 0 \}\cup \{ (0,y) : y\geq 1 \}$  and was harmonic on the complement of this set. For $p = 2$ the
function $U_2$  was equal
to the function $(x, y)\mapsto \indic{y\geq 0} - |x|^2$ on the set $\{(x,y) : y\geq 0 \}\cup \{ (x,y) : y\geq 1 \}$  and was harmonic on the set $\{(x,y) : 0<y<1 \}$. This suggests that for
each $p$ there should be a domain $\Omega_p$  contained in $\{(x,y) : y > 0\}$ such that $U_p$ is harmonic on $\Omega_p$ and equal to the function $(x,y)\mapsto\indic{y\geq 0} - c(p) |x|^p$ outside $\Omega_p$. For a similar phenomenon arising in the context of martingale transforms,
where the corresponding problems were completely solved, see the works
of Os\c{e}kowski \cite{osekowski-survey} and Suh \cite{suh}. 
\end{remark}

The last remark addresses the problem of extending the results of the preceding sections to the Hilbert transform on the real line.

\begin{remark} The problem investigated in this note does not make sense in the nonperiodic setting. By a ``blowing-up the circle'' argument inequality \eqref{inequality-osekowski} implies the following sharp estimate 
for the Hilbert transform on the real line
\begin{equation}\label{inequality-line-osekowski}
\left| \left\{ x\in\RR : \hilbline f (x) \geq 1 \right\} \right| \leq \| f \|_1, \quad f\in L^{1}(\RR) 
\end{equation}
(see \cite{osekowski-one-sided} for details).
Suppose that for some positive $c$ there exists a finite constant $D(c)$ for which
\begin{align}\label{inequality-line-p-1-functions}
\left| \left\{ x\in\RR : \hilbline f (x) \geq 1 \right\} \right| \leq c\| f \|_1 +D(c)
\end{align}
holds for every integrable function $f:\RR\to\RR$. We will show that then $c$ has to be greater or equal than $1$ (in which case we can simply take $D(c) = 0$). For  $f\in L^{1}(\RR)$ and $\lambda>0$ define $f_{\lambda} (x) = f(x/\lambda)$. Then $\|f_\lambda\|_1 = \lambda \|f\|_1$, $\hilbline f_{\lambda}(x) = \hilbline f (x/\lambda)$ and 
\begin{align*}
\left| \left\{ x\in\RR : \hilbline f_{\lambda} (x) \geq 1 \right\} \right| & = \left| \left\{ x\in\RR : \hilbline f (x/\lambda) \geq 1 \right\} \right| \\
& = \lambda \left| \left\{ x\in\RR : \hilbline f (x) \geq 1 \right\} \right|.
\end{align*}
After applying  \eqref{inequality-line-p-1-functions} to $f_{\lambda}$, dividing both sides by $\lambda$ and letting $\lambda\to\infty$, we arrive at
\begin{align*}
\left| \left\{ x\in\RR : \hilbline f (x) \geq 1 \right\} \right| \leq c\| f \|_1.
\end{align*}
Our claim follows from the arbitrariness of $f\in L^{1}(\RR)$ and the sharpness of \eqref{inequality-line-osekowski}.
\end{remark}

%%%%%%%%%%%%%%%%%%%%%%%%%%%%%%%%%%%%%%%%%%%%%%%%%%
\section*{Acknowledgments}

I would like to thank Adam Os\c{e}kowski for suggesting the problem and for many hours of conversations on related topics.

\bibliographystyle{amsplain}
\bibliography{strzelecki-one-sided-arxiv}

\end{document}